\theoremstyle{plain}
\newtheorem{thm}{Theorem}[section]
\newtheorem{cor}[thm]{Corollary}
\theoremstyle{definition}
\newtheorem{remark}[thm]{Remark}
\newcommand{\abs}[1]{\lvert#1\rvert}
\newcommand{\norm}[1]{\lVert#1\rVert}
\newcommand{\bigabs}[1]{\bigl\lvert#1\bigr\rvert}
\newcommand{\bignorm}[1]{\bigl\lVert#1\bigr\rVert}
\newcommand{\la}{\langle}
\newcommand{\ra}{\rangle}
\newcommand{\bs}{\backslash}
\newcommand{\N}{{\mathbb N}}
\newcommand{\R}{{\mathbb R}}
\newcommand{\ep}{\varepsilon}
\newcommand{\al}{\alpha}
\newcommand{\bP}{{\mathbb P}}
\DeclareMathOperator{\Span}{Span}
\def\one{\mathbb 1}
\numberwithin{equation}{section}
\author[N.~Gao]{Niushan Gao}
\address{Department of Mathematics, Ryerson University, 350 Victoria Street, Toronto, Canada M5B2K3}
\email{niushan@ryerson.ca}
\author[D.~Leung]{Denny H.~Leung}
\address{Department of Mathematics, National University of Singapore, Singapore 117543}
\email{matlhh@nus.edu.sg}
\author[F.~Xanthos]{Foivos Xanthos}
\address{Department of Mathematics, Ryerson University, 350 Victoria Street, Toronto, Canada M5B2K3}
\email{foivos@ryerson.ca}
\keywords{Local Hahn-Banach Theorem, local convexity,  separating dual space, uo-dual,  uniform integrability}
\subjclass[2010]{46A22, 46A20, 60A10}
\thanks{The first and third authors acknowledge support of NSERC Discovery Grants. The second author is partially supported by AcRF grant R-146-000-242-114.}
\date{\today}
\begin{document}

\title[Local HB Theorem]{A local Hahn-Banach theorem and its applications}
 \maketitle

\begin{abstract}
An important consequence of the Hahn-Banach Theorem says that  on any locally convex Hausdorff topological space $X$, there are sufficiently many continuous linear functionals to separate points of $X$.
In the paper, we establish a ``local'' version of this theorem.  The result is applied  to study the uo-dual of a Banach lattice that was recently introduced in \cite{GLX:18}. We also provide a simplified approach to the measure-free characterization of uniform integrability established in \cite{K:14}.
\end{abstract}

\section{Introduction}

The following fundamental result is a well-known consequence of the Hahn-Banach Theorem.

\begin{thm}[Hahn-Banach]\label{HB}
Let $(X,\tau)$ be a real, locally convex, Hausdorff topological vector space. Let $X^*$ be the collection of $\tau$-continuous linear functionals on $X$. Then $X^*$ separates points of $X$, namely, for any $x\in X\bs\{0\}$, there exists $\phi\in X^*$ such that $\phi(x)\neq 0$.
\end{thm}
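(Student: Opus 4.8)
The plan is to deduce this from the analytic (extension) form of the Hahn--Banach theorem, using local convexity to build a suitable sublinear majorant. Fix $x\in X\bs\{0\}$. Since $(X,\tau)$ is Hausdorff, $X\bs\{x\}$ is an open neighbourhood of $0$, and by local convexity it contains a convex, balanced, open neighbourhood $U$ of $0$; in particular $x\notin U$. I would then pass to the Minkowski gauge $p_U(y)=\inf\{t>0:y\in tU\}$. Because $U$ is a $0$-neighbourhood in a topological vector space it is absorbing, so $p_U$ is finite-valued; convexity of $U$ yields subadditivity and positive homogeneity is immediate, so $p_U$ is sublinear. Two standard facts about gauges of open convex sets will be used: $U=\{p_U<1\}$, so $p_U(x)\ge 1$; and $\ep U\subseteq\{p_U<\ep\}$ for every $\ep>0$, so $p_U$ is continuous at $0$ and hence, being sublinear, $\tau$-continuous.

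Next I would run Hahn--Banach on the line $\R x$. Define $\phi_0\colon\R x\to\R$ by $\phi_0(tx)=t$. A two-case check gives $\phi_0\le p_U$ on $\R x$: for $t\ge 0$ one has $\phi_0(tx)=t\le t\,p_U(x)=p_U(tx)$ since $p_U(x)\ge 1$, while for $t<0$ one has $\phi_0(tx)=t<0\le p_U(tx)$. The analytic Hahn--Banach theorem then extends $\phi_0$ to a linear functional $\phi\colon X\to\R$ with $\phi\le p_U$ on all of $X$. Replacing $y$ by $-y$ gives $\abs{\phi}\le p_U$, hence $\abs{\phi}\le\ep$ on $\ep U$; so $\phi$ is continuous at $0$ and therefore $\phi\in X^*$. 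Finally $\phi(x)=\phi_0(x)=1\neq 0$, which is exactly the assertion.

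Essentially all of the mathematical content sits in the analytic Hahn--Banach theorem, which I would simply cite; the remainder is routine bookkeeping with gauges. The step most prone to slips is the pair of inequalities linking $\phi$ to $p_U$ — the domination $\phi_0\le p_U$ on $\R x$ and the continuity estimate $\abs{\phi}\le p_U$ — each of which depends on carefully tracking the sign of the scalar $t$ and on having chosen $U$ open (so that $p_U(x)\ge 1$) and balanced (so that $p_U$ is symmetric and continuity at $0$ propagates). Beyond getting these elementary estimates right, I anticipate no genuine obstacle.
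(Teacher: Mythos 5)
Your proof is correct: the two-case domination $\phi_0\le p_U$ on $\R x$, the symmetry of $p_U$ coming from balancedness, and the continuity estimate $\abs{\phi}<\ep$ on $\ep U$ all check out, and the standard gauge facts you invoke ($U=\{p_U<1\}$ for an open convex $0$-neighbourhood, hence $p_U(x)\ge 1$) are used correctly. Note, though, that the paper does not prove Theorem~\ref{HB} directly at all: it records it as the special case $B=X$ of Theorem~\ref{localHB}, which in turn is proved via Theorem~\ref{slocalHB}. Your argument is the classical direct one, and it is in fact the template that the paper's proof of Theorem~\ref{slocalHB} generalizes: where you take a single convex balanced open $U$ with $x\notin U$ and apply the analytic Hahn--Banach theorem to its Minkowski gauge, the paper must work with convexity available only on $B$, so it replaces $U$ by the set $D=\bigcup_k(D_k-D_k)$ built from a carefully nested sequence of convex $\tau_B$-neighbourhoods $C_k$ (kept away from $x$ via the sets $V_k$), and then runs the same gauge-plus-extension step on $D$. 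So your route is simpler and entirely adequate for the globally locally convex statement, while the paper's extra machinery is what buys the local version; as a proof of Theorem~\ref{HB} itself there is no gap.
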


Theorem \ref{HB} establishes $\langle X, X^*\rangle$ as a dual pair, thus enabling powerful results of duality theory to be brought to bear in many situations.  To a large extent, it  explains the central importance of the Hahn-Banach Theorem in many applications.
The present paper aims at proving the following localized version, with applications to follow.

\begin{thm}\label{localHB}
Let $(X,\tau)$ be a real, Hausdorff  topological vector space. Let $B$ be a convex, absorbing, balanced set in $X$ and $X^*_B$ be the collection of all linear functionals on $X$ that are continuous on $B$ endowed with the relative $\tau$-topology. Suppose that the relative $\tau$-topology on $B$ is locally convex. Then $X^*_B$ separates points of $X$.
\end{thm}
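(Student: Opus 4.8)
We sketch the plan. Fix $x\in X\bs\{0\}$; since $B$ is absorbing and $\phi(x)=t_0\phi(t_0^{-1}x)$ for any linear $\phi$, we may assume $x\in B\bs\{0\}$. Call a subset of $B$ \emph{basic} if it is convex, balanced, contains $0$, and is open in the relative $\tau$-topology of $B$; by local convexity (and symmetrizing $W\rightsquigarrow W\cap(-W)$) the basic sets form a neighbourhood base at $0$ for $\tau|_B$, and by Hausdorffness $\bigcap\{W:W\text{ basic}\}=\{0\}$. Two elementary observations will be used. First, every basic $W$ is \emph{absorbing in $X$}: if $y=sb$ with $s>0$, $b\in B$, then $t^{-1}b\to0$ in $\tau$ as $t\to\infty$, so $t^{-1}b\in W$ for some $t\ge1$ and $y\in stW$; hence the Minkowski functional $p_W$ is a (finite) seminorm on $X$, and $x\notin W$ forces $p_W(x)\ge1$ since $W$ is balanced. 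Second, a linear $\phi$ on $X$ that is $\tau$-continuous at $0$ on $B$ is $\tau$-continuous on all of $B$: for $b,b_0\in B$ we have $\tfrac12(b-b_0)\in B$, $\phi(b)-\phi(b_0)=2\phi(\tfrac12(b-b_0))$, and $b\mapsto\tfrac12(b-b_0)$ is $\tau$-continuous from $B$ to $B$ carrying $b_0$ to $0$; by local convexity this continuity at $0$ says precisely that for every $\ep>0$ there is a basic $W$ with $|\phi|\le\ep$ on $W$, i.e.\ that $|\phi|$ is dominated on $X$ by a seminorm which is $\tau$-continuous on $B$.

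Accordingly, let $\rho$ be the locally convex topology on $X$ generated by all seminorms on $X$ whose restriction to $B$ is $\tau$-continuous. By the second observation $\rho^*\subseteq X^*_B$, so by Theorem~\ref{HB} it is enough to show $\rho$ is Hausdorff, i.e.\ to produce a seminorm $q$ on $X$ with $q(x)>0$ that is $\tau$-continuous on $B$: given such $q$, the Hahn–Banach theorem extends $tx\mapsto tq(x)$ from $\R x$ to a linear $\phi$ on $X$ with $|\phi|\le q$, whence $\phi(x)=q(x)\ne0$ and $\phi\in\rho^*\subseteq X^*_B$.

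To build $q$: choose a decreasing sequence $W_1\supseteq W_2\supseteq\cdots$ of basic sets, set $C:=\operatorname{conv}\bigl(\bigcup_{n\ge1}nW_n\bigr)$, and put $q:=p_C$. Then $C$ is convex, balanced, and absorbing (it contains the absorbing set $W_1$), so $q$ is a seminorm; and $C\supseteq nW_n$ gives $q\le\tfrac1n p_{W_n}\le\tfrac1n$ on $W_n$, so $q$ is automatically $\tau$-continuous at $0$ on $B$, hence on $B$. Therefore the whole problem reduces to one point: \emph{the $W_n$ can be chosen so that $q(x)=p_C(x)>0$}, equivalently — since $C$ is balanced and convex — so that $\R x\not\subseteq\operatorname{conv}\bigl(\bigcup_n nW_n\bigr)$.

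This last step is the crux, and the only place where one must fight the possible failure of local convexity of $\tau$ itself. Start from a basic $W$ with $x\notin W$, written $W=B\cap O$ with $O$ a balanced $\tau$-open set not containing $x$. The natural attempt is to make the $W_n$ shrink so rapidly in $\tau$ that the dilates $nW_n$ remain deep inside $O$: using that $\tfrac1cO$ is again a $\tau$-neighbourhood of $0$, pick balanced $\tau$-open $O_n\subseteq\tfrac1{n2^{n}}O$ and then a basic $W_n\subseteq W\cap O_n$, so that $\bigcup_n nW_n\subseteq\tfrac12O$ while $p_{W_n}(x)$ can also be forced to exceed $n$. What still has to be shown is that forming the convex hull does not let $\operatorname{conv}(\bigcup_n nW_n)$ expand past $\tfrac12O$ far enough to meet the ray $\R_{\ge1}x$ — i.e.\ that no convex combination of points of the various $nW_n$ equals $x$. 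I expect this to need a careful stagewise argument controlling finite convex combinations (and the interaction between $\tau$-closures and the relative topology on $B$); that is the main obstacle, and everything else is routine.
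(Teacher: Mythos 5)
There is a genuine gap, and it sits exactly at the step you yourself flag as "the crux". Your surrounding scaffolding is fine and matches the paper's: reduce to producing a seminorm $q$ on $X$ with $q(x)\geq 1$ that is $\tau_B$-continuous at $0$; extend $tx\mapsto tq(x)$ by Hahn--Banach dominated by $q$; and use $\tfrac12(b-b_0)\in B$ (balancedness plus convexity of $B$) to upgrade continuity at $0$ to continuity on $B$. But the heart of the theorem is precisely the construction of a convex, balanced, absorbing set not containing $x$ whose Minkowski functional is small on a basic neighborhood of $0$ in $B$, and your candidate $C=\mathrm{conv}\bigl(\bigcup_n nW_n\bigr)$ is not shown to avoid $x$. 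The containment $\bigcup_n nW_n\subseteq\tfrac12 O$ gives no control over the convex hull: since $\tau$ is not locally convex, $O$ and the $O_n$ are not convex, so convex combinations of points drawn from the various $nW_n$ can leave $O$ entirely and may well reach $x$; nothing in your choice of the $O_n$ (shrinking in $\tau$, which says nothing about convex combinations) prevents this. So the proposal is incomplete at its decisive step, and it is not clear the convex-hull-of-a-union route can be pushed through as stated.

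The paper closes exactly this gap by never taking a convex hull of a union. It first fixes $\tau$-open sets $V_0,V_1,V_2,\dots$ with $(V_0+V_0)\cap(x+V_0+V_0)=\emptyset$ and $kV_k+V_k\subset V_{k-1}$, then chooses convex $\tau_B$-neighborhoods $C_k\subset V_k\cap B$ and forms the Minkowski sums $D_k=C_1+2C_2+\cdots+kC_k$. These are automatically convex (sums of convex sets) and increase in $k$, and the telescoping condition gives $D_k\subset V_1+2V_2+\cdots+kV_k\subset V_0+V_0$, hence $D_k\cap(x+D_k)=\emptyset$ and $x\notin D_k-D_k$. The increasing union $D=\bigcup_k(D_k-D_k)$ is then convex, balanced, and absorbing, still misses $x$, and contains $\pm kC_k$ for every $k$; its Minkowski functional $\rho$ therefore satisfies $\rho(x)\geq1$ while any Hahn--Banach extension $\phi\leq\rho$ of $\alpha x\mapsto\alpha$ obeys $\abs{\phi}\leq\tfrac1k$ on $C_k$, which is the continuity at $0$ you need. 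In short: replace $\mathrm{conv}\bigl(\bigcup_n nW_n\bigr)$ by an increasing union of sums of convex $\tau_B$-neighborhoods placed inside rapidly shrinking $\tau$-open sets chosen in advance to stay away from $x$; without that (or an equivalent device), your argument does not go through.
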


Clearly, taking $B=X$ in Theorem~\ref{localHB} yields Theorem~\ref{HB}. In the case where $X$ is a normed space, $\tau$ is the norm topology, and $B$ is the unit ball, $X_B^*$ is just the norm dual of $X$. In the case where $X$ is the norm dual of a Banach space $Y$, $\tau=\sigma(X,Y)$, and $B$ is the unit ball of $X$, then $X_B^*=Y$, by virtue of the Krein-Smulian theorem.

More interesting applications of Theorem~\ref{localHB} take place when the vector topology $\tau$ is not locally convex itself. Indeed, after proving Theorem~\ref{localHB} in Section~\ref{seclhb}, we will apply it to study the uo-dual of Banach function spaces and Banach lattices, which was recently introduced and studied in \cite{GLX:18}; see Theorems~\ref{t8} and \ref{uod-oc} below. Another application that demonstrates the power of Theorem~\ref{localHB} is Corollary~\ref{t9} where we give a simple proof of a recent theorem on measure-free characterizations of uniformly integrability due to Kardaras \cite{K:14}.

We first establish some notation and terminology. For a topology $\tau$ on a set $X$, denote by $\tau_B$ the relative $\tau$-topology on a subset $B$ of $X$. A subset $B$ of a real vector space $X$ is said to be \emph{balanced} if $\lambda B\subset B$ whenever $\abs{\lambda}\leq 1$ and is said to be \emph{absorbing} if for any $x\in X$, there exists $\lambda_0>0$ such that $\lambda x\in B$ whenever $\abs{\lambda}\leq \lambda_0$. Balanced sets and absorbing sets all contain $0$.
Say that a topology $\tau$ on a subset $B$ of a real vector space is \emph{locally convex at a point $x\in B$} if $\tau$ has a local basis at $x$ consisting of convex sets and that $\tau$ is \emph{locally convex on $B$} if it is locally convex at every point of $B$.
A linear functional $\phi$ on a topological vector space $(X,\tau)$ is said to be \emph{$\tau$-bounded} if $\sup_{x\in V}\abs{\phi(x)}<\infty$ for some open set $V$.
We refer to \cite{R:91} for basic facts and undefined terminology on topological vector spaces.

\section{Main Results}\label{seclhb}

We prove the following result that is slightly stronger than Theorem~\ref{localHB}.

\begin{thm}\label{slocalHB}
Let $(X,\tau)$ be a real, Hausdorff topological vector space. Let $B$ be a convex, absorbing set in $X$ such that the relative topology $\tau_B$ is locally convex at $0$. Then for any $x\in X\bs \{0\}$, there exists a linear functional $\phi:X\to \R$ such that $\phi(x)\neq 0$ and that $\phi_{|B}$ is $\tau_B$-continuous at $0$.  Furthermore, if $B$ is also balanced, then $\phi_{|B}$ is $\tau_B$-continuous on $B$.
\end{thm}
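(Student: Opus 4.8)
The plan is to run a Minkowski-gauge form of the Hahn--Banach argument, localised to $B$. Fix $x\in X\setminus\{0\}$. Since $(X,\tau)$ is Hausdorff, choose a $\tau$-open $V\ni 0$ with $x\notin V$; then $B\cap V$ is a $\tau_B$-neighbourhood of $0$, and, using that $\tau_B$ is locally convex at $0$, I would shrink it to a convex $\tau_B$-open set $U$ with $0\in U\subseteq B\cap V$. In particular $U$ is convex and $x\notin U$.

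The first technical point is that $U$ is absorbing \emph{in $X$}: given $y\in X$, the map $t\mapsto ty$ is $\tau$-continuous and sends some interval around $0$ in $\R$ into $B$ (as $B$ is absorbing), hence into $(B,\tau_B)$; since $U$ is $\tau_B$-open and contains $0=0\cdot y$, it follows that $ty\in U$ for all small $t$. Consequently the gauge $p_U(y)=\inf\{t>0:y\in tU\}$ is a well-defined sublinear functional on $X$, and convexity of $U$ together with $0\in U\not\ni x$ forces $p_U(x)\ge 1$. Now apply the analytic Hahn--Banach theorem: on $\R x$ the functional $sx\mapsto s\,p_U(x)$ is dominated by $p_U$, so it extends to a linear $\phi\colon X\to\R$ with $\phi\le p_U$ on $X$; then $\phi(x)=p_U(x)\ge 1\ne 0$.

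What remains is continuity, and this is the heart of the matter. From $\phi\le p_U\le 1$ on $U$ we know $\phi$ is bounded above on the $\tau_B$-neighbourhood $U$ of $0$; the task is to upgrade this to continuity of $\phi|_B$ at $0$. The usual TVS proof rescales $U$, but that is illegitimate here because $B$ is not a $\tau$-neighbourhood of $0$: $\varepsilon U$ (indeed $\varepsilon B$) need not be a $\tau_B$-neighbourhood of $0$. This is exactly where the local convexity of $\tau_B$ at $0$ must be used in an essential way; I expect the argument chooses $V$, and hence $U$, small enough that $x$ lies ``far outside'' $V$, normalises so that $\phi(x)=1$, and then controls $\phi(y)$ for an arbitrary $y$ by adding large multiples of $x$ so as to push $y$ into the region of the gauge governed by the $\tau$-neighbourhood $V$, whose gauge is itself $\tau$-continuous. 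Granting that $\phi|_B$ is $\tau_B$-continuous at $0$, the last assertion is a short propagation argument: if $B$ is balanced and $y_\alpha\to b$ in $\tau_B$ with $b\in B$, then $\tfrac12(y_\alpha-b)$ is the midpoint of $y_\alpha\in B$ and $-b\in B$, hence lies in $B$, and it converges to $0$ in $\tau_B$; therefore $\phi(y_\alpha-b)=2\phi\bigl(\tfrac12(y_\alpha-b)\bigr)\to 0$, i.e.\ $\phi(y_\alpha)\to\phi(b)$, so $\phi|_B$ is $\tau_B$-continuous on all of $B$. I expect the continuity-at-$0$ step to be the main obstacle.
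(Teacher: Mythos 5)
Your construction of a single convex $\tau_B$-neighbourhood $U\subseteq B\cap V$ with $x\notin U$, the verification that $U$ is absorbing in $X$, the gauge $p_U$, and the Hahn--Banach extension $\phi\le p_U$ with $\phi(x)\ge 1$ are all fine, and your final midpoint argument (for balanced $B$: $\tfrac12(y_\alpha-b)\in B$ and $\tau_B$-converges to $0$) is exactly how the paper passes from continuity at $0$ to continuity on $B$. But the proof has a genuine gap at precisely the point you flag yourself: from $\phi\le p_U\le 1$ on $U$ you only get that $\phi$ is bounded above by $1$ on one $\tau_B$-neighbourhood of $0$, and, as you correctly observe, you cannot rescale because $\varepsilon B$ (hence $\varepsilon U$) need not be a $\tau_B$-neighbourhood of $0$. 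The mechanism you conjecture (``adding large multiples of $x$ to push $y$ into the region governed by the $\tau$-neighbourhood $V$'') is not carried out and is not how the obstacle is overcome, so the central assertion --- $\tau_B$-continuity of $\phi_{|B}$ at $0$ --- remains unproved.

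The paper's device is to encode \emph{all scales at once} in a single convex, balanced, absorbing set avoiding $x$, rather than working with one neighbourhood $U$. Concretely: choose $\tau$-open $V_0\ni 0$ with $(V_0+V_0)\cap(x+V_0+V_0)=\emptyset$, then inductively $\tau$-open $V_k\ni 0$ with $kV_k+V_k\subset V_{k-1}$; by local convexity of $\tau_B$ at $0$ pick convex $\tau_B$-neighbourhoods $C_k\subseteq V_k\cap B$, set $D_k=C_1+2C_2+\cdots+kC_k$ and $D=\bigcup_k (D_k-D_k)$. The telescoping condition on the $V_k$ forces $D_k\cap(x+D_k)=\emptyset$, hence $x\notin D$, while $D$ is convex, balanced and absorbing (absorbency coming from $C_1$, exactly as in your argument for $U$). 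Taking $\rho$ to be the gauge of $D$ and extending $\alpha x\mapsto\alpha$ to $\phi\le\rho$, one gets $\phi(x)=1$ and, for $y\in C_k$, $\pm ky\in D_k-D_k\subset D$, whence $\abs{\phi(y)}\le 1/k$. Since each $C_k$ is a $\tau_B$-neighbourhood of $0$, this yields continuity of $\phi_{|B}$ at $0$; note it also gives the two-sided bound that your one-sided estimate $\phi\le 1$ on $U$ cannot provide (nothing prevents $-U$ from leaving $B$). A further minor point: local convexity at $0$ as defined in the paper supplies convex $\tau_B$-\emph{neighbourhoods}, not $\tau_B$-open convex sets, so you should phrase your choice of $U$ (and the $C_k$) accordingly; your absorbency argument survives unchanged since such a neighbourhood contains $W\cap B$ for some $\tau$-open $W\ni 0$.
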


\begin{proof}
Take $x\in X\bs\{0\}$. Since $\tau$ is Hausdorff and linear, there exists a $\tau$-open neighborhood $V_0$ of $0$ such that $(V_0+V_0)\cap (x+V_0+V_0)=\emptyset$. Inductively, take $\tau$-open sets $(V_k)$ containing $0$ such that $kV_k+V_k\subset V_{k-1}$ for any $k\in\N$. Then for any $k\geq 1$,
\begin{align}
\nonumber&(V_0+V_1+2V_2+\dots+kV_k+V_k)\cap (x+V_0+V_1+2V_2+\dots+kV_k+V_k)\\
\label{equ1}\subset&(V_0+V_0)\cap (x+V_0+V_0)\\
\nonumber=&\emptyset.
\end{align}
Since $\tau_B$ is locally convex at $0$, we can take a convex $\tau_B$-neighborhood $C_k$ of $0$ such that $C_k\subset V_k\cap B$ for each $k\in\N$.
Set $$D_k= C_1+ 2C_2 + \cdots + kC_k$$ for all $k\in \N$.
Since $0\in C_{k+1}$, $D_k \subset D_{k+1}$, for each $k\in\N$.
It follows in particular that $$D := \bigcup_{k=1}^\infty (D_k-D_k)$$ is a convex, balanced set.
Take a $\tau$-open set $U$ containing $0$ such that $U\cap B\subset C_1$. Since both $U$ and $B$ are absorbing, $U\cap B$, and therefore $C_1$, is also absorbing. It follows that $D$ is absorbing. Furthermore, since $D_k\subset V_1+2V_2+\dots+kV_k$, $D_k\cap (x+D_k)=\emptyset$ by \eqref{equ1}. Thus $x\notin D_k-D_k$ for all $k\in\N$, so that $x\notin D$.

Define the Minkowski functional of $D$ by $$\rho(y)=\inf\{\lambda>0:\lambda^{-1} y\in D\},\quad y\in X.$$
Then $\rho$ is a seminorm on $X$ and $\rho(x) \geq 1$ (see, e.g., \cite[Theorem~1.35]{R:91}). Define $\phi_0: \Span\{x\} \to \R$ by $\phi_0(\al x) = \al$.
Then $\phi_0$ is a linear functional on $\Span\{x\}$ and $\phi_0(\alpha x)=\alpha\leq \abs{\alpha}\leq \rho(\alpha x)$ for any $\alpha\in\R$.
By the vector-space version of Hahn-Banach Theorem (see, e.g., \cite[Theorem~3.2]{R:91}), $\phi_0$ extends to a linear functional $\phi: X\to \R$ such that $\phi(y) \leq \rho(y)$ for all $y\in X$. Clearly, $\phi(x)=\phi_0(x)=1$.
Suppose that $y\in C_k$ for some $k$.  Since $0\in D_k$, we have
\[ \pm ky \in \pm k C_k \subset D_k -D_k \subset D.\]
Therefore,
$$\pm k\phi(y)=\phi(\pm ky)\leq \rho(\pm ky)\leq 1,$$
so that $\abs{\phi(y)}\leq\frac{1}{k}$. Since each $C_k$ is a $\tau_B$-neighborhood of $0$ and $\phi_{|B}(0)=0$,  it follows that $\phi_{|B}$ is $\tau_B$-continuous at $0$.

Suppose that $B$ is also balanced. Take a net $(x_\alpha)\subset B$ and $x\in B$ such that $x_\alpha\stackrel{\tau}{\longrightarrow}x$. Then $B\ni\frac{x_\alpha-x}{2}\stackrel{\tau}{\longrightarrow}0$, and thus $\phi(\frac{x_\alpha-x}{2})\longrightarrow0$.
It follows that $\phi_{|B}(x_\alpha)\longrightarrow\phi_{|B}(x)$. This proves that $\phi_{|B}$ is $\tau_B$-continuous on $B$.
\end{proof}

A similar proof yields the following enhancement of  Theorem~\ref{slocalHB}.

\begin{thm}\label{sslocalHB}
Let $X$ be a real  vector space and $\tau^1\subset \tau^2$ be two Hausdorff linear topologies on $X$ such that $\tau^2$ is locally convex. Let $B$ be a convex set in $X$ containing $0$ such that the relative $\tau^1$-topology, $\tau^1_B$, is locally convex at $0$. Then for any $x\in B\bs \{0\}$, there exists a $\tau^2$-bounded linear functional $\phi:X\to \R$ such that $\phi(x)\neq 0$ and that $\phi_{|B}$ is $\tau^1_B$-continuous at $0$.  Furthermore, if $B$ is balanced, then $\phi_{|B}$ is $\tau^1_B$-continuous on $B$.
\end{thm}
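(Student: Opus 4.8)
The plan is to rerun the proof of Theorem~\ref{slocalHB} almost verbatim, with $\tau^1$ playing the role of $\tau$, while inserting one extra ingredient supplied by $\tau^2$: a convex $\tau^2$-open neighborhood of $0$ is thrown into the set $D$. This extra summand simultaneously forces the constructed functional to be $\tau^2$-bounded and makes $D$ absorbing, the latter compensating for the fact that $B$ is no longer assumed absorbing.

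First I would carry out the combinatorial construction of the original proof relative to $\tau^1$: using that $\tau^1$ is Hausdorff and linear, pick a $\tau^1$-open $V_0\ni 0$ with $(V_0+V_0)\cap (x+V_0+V_0)=\emptyset$, and then $\tau^1$-open sets $V_k\ni 0$ with $kV_k+V_k\subset V_{k-1}$, so that \eqref{equ1} holds; since $\tau^1_B$ is locally convex at $0$ and each $V_k$ is a $\tau^1$-neighborhood of $0$, choose convex $\tau^1_B$-neighborhoods $C_k\subset V_k\cap B$ of $0$. Now the new step: $V_0$, being $\tau^1$-open, is $\tau^2$-open, so by local convexity of $\tau^2$ I may fix a convex $\tau^2$-open neighborhood $W$ of $0$ with $W\subset V_0$. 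Then set $D_k=W+C_1+2C_2+\dots+kC_k$ and $D=\bigcup_k (D_k-D_k)$; as in the original proof $D$ is convex and balanced.

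The step that needs care is verifying $x\notin D$ in the presence of the extra summand $W$. Since $D_k\subset V_0+V_1+2V_2+\dots+kV_k$, relation \eqref{equ1} still yields $D_k\cap (x+D_k)=\emptyset$, hence $x\notin D_k-D_k$ for every $k$ and therefore $x\notin D$ — the point is simply that \eqref{equ1} carries a ``spare'' copy of $V_0$ on each side into which $W$ fits. Conversely $0\in W$ and $0\in C_k$ give $W\subseteq D_k$, so $D\supseteq W$; thus $D$ contains a $\tau^2$-neighborhood of $0$ and is therefore absorbing. Hence the Minkowski functional $\rho$ of $D$ is a bona fide seminorm on $X$ with $\rho(x)\ge 1$, and exactly as in the proof of Theorem~\ref{slocalHB} the vector-space Hahn--Banach theorem produces a linear $\phi:X\to\R$ with $\phi\le\rho$ on $X$ and $\phi(x)=1$. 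From $\abs{\phi(y)}\le\rho(y)\le 1$ for all $y\in D$, and in particular for $y\in W$, it follows that $\phi$ is $\tau^2$-bounded, while $\pm kC_k\subset D_k-D_k\subset D$ gives $\abs{\phi(y)}\le 1/k$ for $y\in C_k$, so $\phi_{|B}$ is $\tau^1_B$-continuous at $0$. The balanced case is then handled word for word as in Theorem~\ref{slocalHB}: if $(x_\alpha)\subset B$ and $x\in B$ with $x_\alpha\stackrel{\tau^1}{\longrightarrow}x$, then $\frac{x_\alpha-x}{2}\in B$ and $\frac{x_\alpha-x}{2}\stackrel{\tau^1}{\longrightarrow}0$, whence $\phi\bigl(\frac{x_\alpha-x}{2}\bigr)\to 0$ and therefore $\phi_{|B}(x_\alpha)\to\phi_{|B}(x)$.

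I expect the only genuine obstacle to be the bookkeeping in the previous paragraph: arranging $W$ so that it lands inside the slack of \eqref{equ1} (preserving $x\notin D$) yet is large enough to be a $\tau^2$-neighborhood of $0$ (making $D$ absorbing and $\phi$ $\tau^2$-bounded). No idea beyond those already present in the proof of Theorem~\ref{slocalHB} seems to be required.
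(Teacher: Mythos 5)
Your proof is correct and takes essentially the same route as the paper's own (sketched) argument: the paper likewise inserts a convex $\tau^2$-neighborhood $V\subset V_0$ as an extra summand in each $D_k$, obtaining absorbency of $D$ and $\tau^2$-boundedness of $\phi$ from it, with all remaining steps carried over from Theorem~\ref{slocalHB}. Your check that $x\notin D$ still holds because the spare copy of $V_0$ in \eqref{equ1} absorbs the new summand is precisely the detail the paper leaves implicit.
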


\begin{proof}
We sketch the proof. Proceed as in the proof of Theorem~\ref{slocalHB} to construct $V_k$, $k\geq 0$, using the topology $\tau^1$. Since $V_0$ is also $\tau^2$-open and $\tau^2$ is locally convex, we can take a convex $\tau^2$-neighborhood $V$ of $0$ such that $V\subset V_0$. Now set $$D_k=V+C_1+2C_2+\cdots+ kC_k$$ for $k\in\N$.
Define $D$ in the same way. Note that since $V$ is absorbing and $0\in C_k$ for $k\in\N$, $D$ is also absorbing. Let $\phi_0$ and $\rho$ be as in the proof of Theorem~\ref{slocalHB}  and let $\phi$ be a linear extension of $\phi_0$ dominated by $\rho$.    Since $\pm V\subset D$, $\phi(\pm y)\leq \rho(\pm y)\leq 1$ and thus $\abs{\phi(y)}\leq 1$ for $y\in V$. Thus $\phi$ is $\tau^2$-bounded.  As before, one can show the $\tau^1_B$-continuity of $\phi_{|B}$ at $0$ and on $B$ if $B$ is balanced.
\end{proof}

\begin{remark}
\begin{enumerate}
\item
Theorems~\ref{slocalHB} and \ref{sslocalHB} (and therefore, Theorem~\ref{localHB}) also hold for complex vector spaces $X$. One simply regard $X$ as a real space and produce the real-linear functional $\phi'$ as in these theorems. Now set $\phi$ to be the complexication of $\phi'$: $\phi(x)=\phi'(x)-i\phi'(ix)$, for every $x\in X$.
\item
When $B$ is balanced, the restriction $\phi_{|kB}$ of the functionals $\phi$ produced in Theorems~\ref{slocalHB} and \ref{sslocalHB}  are $\tau_{kB}$- and  $\tau^1_{kB}$-continuous respectively for all $k\in \N$.
\end{enumerate}
\end{remark}

We now apply these results to the study of the uo-dual of Banach function spaces and Banach lattices.
For the rest of the paper, fix a  non-atomic probability space $(\Omega,\Sigma,\bP)$. Let $L^0:=L^0(\Omega,\Sigma,\bP)$ be the space of all real-valued measurable functions modulo a.s.-equality. $L^0$ carries a natural metrizable linear topology, namely, the topology of convergence in probability. We  call it the \emph{$L^0$-topology}, and to be concrete, we use the metric $d(x,y)=\int \abs{x-y}\wedge \one\mathrm{d}\bP$ to generate it.
A probability measure $\mathbb{Q}$ on $\Sigma$ is {\em equivalent} to $\bP$ (written as $\mathbb{Q} \sim \bP$) if
$\bP$ and $\mathbb{Q}$ are mutually absolutely continuous with respect to one another.
If $\mathbb{Q}\sim \bP$ then the $L^0(\mathbb{Q})$- and the $L^0(\bP)$-topologies are the same. A \emph{normed function space}  $X$ over $(\Omega,\Sigma,\bP)$ is a nonzero vector subspace of $L^0$ with a norm $\norm{\cdot}$ such that $y\in X$ and $\norm{y}\leq \norm{x}$ whenever $x,y\in L^0$ satisfy $x\in X$ and $\abs{y}\leq \abs{x}$ a.s.
The \emph{uo-dual} of $X$ is the collection of all linear functionals $\phi$ on $X$ such that $\phi(x_n)\longrightarrow 0$ whenever $(x_n)$ is norm bounded and $x_n\stackrel{a.s.}{\longrightarrow}0$.
Replacing a.s.-convergence with convergence in probability would result in the same collection of functionals.
Each $\phi\in X_{uo}^\sim$ can be represented by a suitable function $f\in L^0$ such that $\int\abs{fx}<\infty $ for all $x\in X$ via $\phi(x)=\la f,x\ra:=\int fx$ for all $x\in X$.
A linear functional $\phi$ on $X$ is {\em positive} if $\phi(x) \geq 0$ whenever $x \geq 0$.
$\phi$ is {\em strictly positive} if  $\phi(x) >0$ for any nonzero $x$ such that $x \geq 0$.
If $\phi\in X_{uo}^\sim$ is positive, then a representative $f\in L^0$ of it as above may be chosen to be a nonnegative function. In fact, by positivity one sees that $ \one_{\{f< 0\}}x=0$ a.s.~for all $x\in X$, and thus we can replace $f$ with $f\one_{\{f\geq 0\}}$ if necessary.

\begin{thm}\label{t8}
For a  normed function space $X$ over $(\Omega,\Sigma,\bP)$, the following statements are equivalent.
\begin{enumerate}
\item\label{t80} The relative $L^0$-topology on the unit ball of $X$ is locally convex.
\item\label{t81} The relative $L^0$-topology on the unit ball of $X$ is locally convex at $0$.
\item\label{t82} The uo-dual $X_{uo}^\sim$ separates points of $X$.
\item\label{t83} $X_{uo}^\sim$ admits a strictly positive member $\phi$.
\end{enumerate}
\end{thm}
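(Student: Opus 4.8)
The plan is to establish the cycle $(\ref{t80})\Rightarrow(\ref{t81})\Rightarrow(\ref{t82})\Rightarrow(\ref{t83})\Rightarrow(\ref{t80})$. Throughout, regard $X$ as a subspace of $L^0$ with the relative $L^0$-topology $\tau$, which is linear, metrizable and Hausdorff, and note that the unit ball $B_X$ is convex, balanced and absorbing. The implication $(\ref{t80})\Rightarrow(\ref{t81})$ is immediate since $0\in B_X$. For $(\ref{t81})\Rightarrow(\ref{t82})$ I would invoke Theorem~\ref{slocalHB} for $(X,\tau)$ and $B=B_X$: it supplies, for each $x\in X\bs\{0\}$, a linear functional $\phi\colon X\to\R$ with $\phi(x)\neq0$ whose restriction to $B_X$ is $\tau_{B_X}$-continuous. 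Since $\tau_{B_X}$ is metrizable, $\tau_{B_X}$-continuity of $\phi_{|B_X}$ at $0$ is the corresponding sequential statement, and rescaling a norm-bounded null sequence into $B_X$ shows at once that $\phi\in X_{uo}^\sim$; hence $X_{uo}^\sim$ separates points.

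The implication $(\ref{t82})\Rightarrow(\ref{t83})$ is the substantive one. Its engine is the observation that $\langle f,\cdot\rangle\in X_{uo}^\sim$ implies $\langle\abs f,\cdot\rangle\in X_{uo}^\sim$: for norm-bounded $x_n\to0$ a.s., apply the defining property of $\langle f,\cdot\rangle$ to $\operatorname{sgn}(f)\abs{x_n}$ to get $\int\abs f\abs{x_n}\to0$. Consequently $\mathcal G:=\{\,0\le g\in L^0:\langle g,\cdot\rangle\in X_{uo}^\sim\,\}$ is the positive cone of an ideal of $L^0$ (stable under passing to smaller nonnegative functions, under addition, hence under finite suprema), and by $(\ref{t82})$ every nonzero $x\ge0$ in $X$ satisfies $\int gx>0$ for some $g\in\mathcal G$, from which the carrier of $\mathcal G$ equals the carrier $C$ of $X$. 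Stability under finite suprema lets me pick $g_n\in\mathcal G$ with $\supp g_n\uparrow C$. The key further input — which I would prove separately or quote from \cite{GLX:18} — is that $X_{uo}^\sim\subseteq X^{*}$; the cleanest route I see is first to show $X\hookrightarrow L^0$ is continuous, arguing that if $\norm{\one_{A_n}}\to0$ while $\bP(A_n)\ge\delta>0$ then a weak-$L^2$ cluster point $h$ of $(\one_{A_n})$ has $\int h\ge\delta$, so Mazur's lemma yields convex combinations $w_j\in\operatorname{conv}\{\one_{A_k}:k\ge j\}$ with $\norm{w_j}\to0$ but $w_j\to h\neq0$ in probability, which is impossible because $\norm\cdot$ is a genuine Riesz norm (on a positive-measure set the $w_j$ stay bounded below, forcing an indicator to have zero norm). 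Granting $X_{uo}^\sim\subseteq X^*$, set $M_n:=\sup\{\int g_n\abs x:\norm x\le1\}<\infty$, disjointify to $z_n:=g_n\one_{\supp g_n\setminus\supp g_{n-1}}\in\mathcal G$, and put $g^{*}:=\sum_n 2^{-n}(1+M_n)^{-1}z_n$. Disjointness of the supports makes $g^{*}$ a well-defined element of $L^0$ with $\supp g^{*}=C$; the estimate $\int z_n\abs x\le M_n\norm x$ makes $\langle g^{*},\cdot\rangle$ finite on $X$, and convergence of the series term by term together with the summable dominant $2^{-n}$ gives $\langle g^{*},\cdot\rangle\in X_{uo}^\sim$. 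Since $g^{*}>0$ on $C\supseteq\supp x$ for every $x\in X$, this functional is strictly positive.

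For $(\ref{t83})\Rightarrow(\ref{t80})$, let $\phi=\langle g,\cdot\rangle\in X_{uo}^\sim$ be strictly positive; replacing $g$ by $g\wedge\one$, assume $0\le g\le1$, still strictly positive. Then $\rho(x):=\int g\abs x\,\mathrm d\bP$ is a norm on $X$ ($\rho(x)=0$ forces $x=0$ on $\{g>0\}\supseteq\supp x$), so the $\rho$-topology is locally convex. It remains to check $\tau_{B_X}$ equals the $\rho$-topology relativised to $B_X$; both being metrizable, compare convergent sequences. If $\rho(x_n-x)\to0$ then $g\abs{x_n-x}\to0$ in probability, so $x_n\to x$ in probability (on $\{g>0\}=C$, trivially off $C$). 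Conversely, for $x_n,x\in B_X$ with $x_n\to x$ in probability, $\abs{\tfrac12(x_n-x)}\in B_X$ tends to $0$ in probability, so $\rho\bigl(\tfrac12(x_n-x)\bigr)=\bigl\langle g,\abs{\tfrac12(x_n-x)}\bigr\rangle\to0$ because $\langle g,\cdot\rangle\in X_{uo}^\sim$ (using that a.s.-convergence may be replaced by convergence in probability). Thus $\tau_{B_X}$ is locally convex, completing the cycle. The single real obstacle is the boundedness lemma $X_{uo}^\sim\subseteq X^*$ inside $(\ref{t82})\Rightarrow(\ref{t83})$; everything else is either a direct appeal to Theorem~\ref{slocalHB} or a routine exhaustion-and-metrizability argument.
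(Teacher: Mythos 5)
Your proposal is correct, and its overall architecture is the same as the paper's: the cycle \eqref{t80}$\Rightarrow$\eqref{t81}$\Rightarrow$\eqref{t82}$\Rightarrow$\eqref{t83}$\Rightarrow$\eqref{t80}, with \eqref{t81}$\Rightarrow$\eqref{t82} coming straight from the local Hahn--Banach theorem via the identification $X^*_B=X^\sim_{uo}$ (metrizability of the $L^0$-topology making the sequential and topological formulations agree), and \eqref{t83}$\Rightarrow$\eqref{t80} by showing the $\phi(\abs{\cdot})$-norm topology and the $L^0$-topology coincide on the unit ball. Where you diverge is in the execution of \eqref{t82}$\Rightarrow$\eqref{t83} and in the treatment of the boundedness fact $X^\sim_{uo}\subseteq X^*$. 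The paper gets the latter by citing that norm-null sequences in a normed function space are $L^0$-null (Meyer--Nieberg), and then builds the strictly positive functional by invoking order completeness and the countable sup property of $L^0$: it takes an increasing sequence $(f_n)$ realizing $\sup\{f\wedge\one:0\le f\in X^\sim_{uo}\}$, sums $\sum_n f_n/(2^n\norm{f_n}_{X^*}+1)$, and checks strict positivity by Monotone Convergence. You instead run a carrier-exhaustion: the nonnegative representatives of $X^\sim_{uo}$ form (the positive cone of) an ideal closed under finite suprema, separation forces its carrier to equal that of $X$, and a disjointified, normalized series over an exhausting sequence of supports gives the strictly positive member. These are equivalent in spirit (both reduce to a countable subfamily plus a weighted $\ell^1$-sum, and both need $X^\sim_{uo}\subseteq X^*$ to normalize), so neither buys much over the other; your version trades the countable sup property for an explicit support exhaustion and a domination argument in place of MCT. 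The one genuinely self-contained addition is your proof of the embedding lemma (norm-null $\Rightarrow$ $L^0$-null for a possibly incomplete normed function space) via a weak-$L^2$ cluster point, Mazur's lemma, and Egorov to produce an indicator of zero norm --- this argument is sound and replaces the paper's citation; just make sure you record the easy deduction from it to $X^\sim_{uo}\subseteq X^*$ (normalize an unbounded-witness sequence, pass to an a.s.-convergent subsequence) and note, as the paper does, that a.s.-convergence and convergence in probability give the same uo-dual.
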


\begin{proof}
The implication \eqref{t80}$\implies$\eqref{t81} is trivial. Let $\tau$ be the relative $L^0$-topology on $X$ and $B$ be the  unit ball of $X$. Let $X_B^*$ be the collection of functionals as defined in Theorem~\ref{localHB}. Clearly, $X_B^*=X_{uo}^\sim$. Thus the implication \eqref{t81}$\implies$\eqref{t82} follows immediately from Theorem~\ref{localHB}.

Suppose that \eqref{t82} holds.
As mentioned above, we identify functionals in $X_{uo}^\sim$ with certain functions in $L^0$.
Since $L^0$ is order complete and has the countable sup property, $\sup\{f\wedge \one: 0\leq f\in X_{uo}^\sim\}$ exists in $L^0$ and
there is an increasing nonnegative sequence $(f_n)$ in  $X_{uo}^\sim$ such that
\begin{equation}\label{supfn}
\sup\{f\wedge \one: 0\leq f\in X_{uo}^\sim\}=\sup\{f_n\wedge \one: n\in\N\}\quad\mbox{ in } L^0.\end{equation}
Since norm null sequences in $X$ are $L^0$-null (cf.~\cite[Proposition~2.6.3]{MN:91}), one sees that $X_{uo}^\sim$  is a norm closed subspace of the norm dual $X^*$ of $X$.
Put $f_0=\sum_{n\geq 1}\frac{f_n}{2^n\norm{f_n}_{X^*}+1}$. Then $f_0\in X^\sim_{uo}$.
We show that $f_0$ is a strictly positive element in $X_{uo}^\sim$.
Let $x$ be a nonzero positive element in $X$. Since $X_{uo}^\sim$ separates points of $X$, there exists $g
\in X^\sim_{uo}\bs\{0\}$ such that $\la g,x\ra\neq 0$.
Let  $f  =(\abs{g}\wedge \one)\one_{\{x>0\}}$. One sees that $0 \leq f\leq \one$, $f\in X^\sim_{uo}$ and $\langle f, x\rangle > 0$.
By (\ref{supfn}) and Monotone Convergence Theorem,
\[ 0 < \langle f, x\rangle =\langle f\wedge \one  , x\rangle
\leq \sup_n\langle f_n\wedge \one,x\rangle.\]
Therefore, there exists $n_0$ such that $\langle f_{n_0},x\rangle \geq \langle f_{n_0}\wedge \one,x\rangle > 0$.
It follows that $\langle f_0,x\rangle >0$, as required. This proves \eqref{t82}$\implies$\eqref{t83}.

Suppose that \eqref{t83} holds. Let $\phi\in X_{uo}^\sim$ be  strictly positive on $X$.
We claim that the relative topology on $B$ induced by the lattice norm $\norm{\cdot}_\phi:=\phi(\abs{\cdot})$ is the same as the relative $L^0$-topology, from which \eqref{t80} clearly follows.
Indeed, on the one hand, since norm null sequences are $L^0$-null, $\norm{\cdot}_\phi$-convergence implies $L^0$-convergence.
On the other hand, for $(x_n)\subset B$ and $x\in B$, if $x_n\stackrel{L^0}{\longrightarrow}x$, then $(\abs{x_n-x})_n$ is bounded and $L^0$-converges to $0$. Thus $\norm{x_n-x}_\phi=\phi(\abs{x_n-x})\longrightarrow0$  since  $X_{uo}^\sim = X^*_B$.
This proves the claim and hence \eqref{t83}$\implies$\eqref{t80}.
\end{proof}

Note that Condition~\eqref{t81} in Theorem~\ref{t8} is equivalent to saying that whenever a sequence $(x_n)$ in the unit ball converges to $0$ in probability then any of its convex block sequence  $(\sum_{j=p_{n-1}+1}^{p_n}c_jx_j)_n$, $0=p_0<p_1<p_2<\cdots$, also converges to $0$ in probability.

An easy application of Theorem~\ref{t8} is as follows. It is known from \cite{GLX:18} that $(L^p)_{uo}^\sim=L^q$ for $1<p\leq \infty$ and $p^{-1}+q^{-1}=1$ and $(L^{1})_{uo}^\sim=\{0\}$. Thus the relative $L^0$-topology is locally convex on the unit ball of $L^p$, $1<p\leq \infty$, but not on the unit ball of $L^1$.

A more striking application of Theorem~\ref{t8} concerns measure-free characterizations of uniform integrability. A set $K$ is  \emph{bounded in $L^0(\mathbb{P})$}  if $\lim_n\sup_{f\in K}\mathbb{P}(\abs{f}>n)=0$. It is  \emph{$\bP$-uniformly integrable} if $\lim_{\bP(A)\rightarrow0}\sup_{x\in K}\int_A \abs{x}\mathrm{d}\bP=0$. A set $K$ in $L^0$ such that  $y\in K$ whenever $\abs{y}\leq \abs{x}$ for some $x\in K$ is said to be \emph{solid}.
Solid sets contain $0$. The following beautiful result was proved by Kardaras \cite{K:14} using sophisticated methods.

\begin{cor}[Kardaras]\label{t9}
Let $K \subseteq {L}^0(\mathbb{P})$ be convex, solid, and bounded in probability. The following are equivalent.
\begin{enumerate}
\item\label{t91} The relative $L^0$-topology on $K$ is locally convex at $0$.
\item\label{t92} The relative $L^0(\mathbb{Q})$- and $L^1(\mathbb{Q})$-topologies coincide on $K$ for some $\mathbb{Q}\sim \mathbb{P}$.
\item\label{t93} $K$ is $\mathbb{Q}$-uniformly integrable for some $\mathbb{Q} \sim \mathbb{P}$.
\end{enumerate}
\end{cor}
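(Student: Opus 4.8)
The plan is to prove the cyclic chain \eqref{t91}$\Rightarrow$\eqref{t92}$\Rightarrow$\eqref{t93}$\Rightarrow$\eqref{t91}. The implications \eqref{t92}$\Rightarrow$\eqref{t93} and \eqref{t93}$\Rightarrow$\eqref{t91} are classical statements of Vitali type and make no use of the local Hahn--Banach theorem, so I will deal with them briefly; the substantive implication, and the only one that invokes Theorems~\ref{slocalHB} and~\ref{t8}, is \eqref{t91}$\Rightarrow$\eqref{t92}.

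For \eqref{t91}$\Rightarrow$\eqref{t92} the idea is to cast $K$ in the role of the unit ball of a normed function space and then quote Theorem~\ref{t8}. We may assume $K\ne\{0\}$, since otherwise all three statements hold trivially. Put $X=\bigcup_{n\in\N}nK$. As $K$ is convex and solid, $X$ is an ideal of $L^0$ and $K$ is a convex, balanced, absorbing subset of $X$; hence the Minkowski functional $\rho$ of $K$ is a lattice seminorm on $X$. It is in fact a \emph{norm}, and this is precisely where boundedness of $K$ in probability enters: if $\mu x\in K$ for all $\mu>0$, then $\bP(x\ne 0)\le\sup_{z\in K}\bP(\abs{z}>m)$ for every $m\in\N$, forcing $x=0$. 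Thus $(X,\rho)$ is a normed function space over $(\Om,\Sig,\bP)$. Since $\rho$-bounded subsets of $X$ lie in scalar multiples of $K$, every linear functional on $X$ that is continuous on $K$ for the relative $L^0$-topology belongs to the uo-dual $X^\sim_{uo}$ of $(X,\rho)$; in the notation of Theorem~\ref{localHB}, $X^*_K\subseteq X^\sim_{uo}$ (equality holds, but one inclusion suffices here).

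Now I would apply Theorem~\ref{slocalHB} to $X$ with the relative $L^0$-topology and $B=K$: this is legitimate because $X$ is a Hausdorff topological vector space, $K$ is convex, balanced and absorbing in $X$, and the relative $L^0$-topology on $K$ is locally convex at $0$ by \eqref{t91}. The theorem provides, for each nonzero $x\in X$, a linear functional on $X$ that does not vanish at $x$ and whose restriction to $K$ is continuous; therefore $X^*_K$, and a fortiori $X^\sim_{uo}$, separates the points of $X$. By Theorem~\ref{t8}, the implication \eqref{t82}$\Rightarrow$\eqref{t83}, there is then a strictly positive $\phi\in X^\sim_{uo}$, represented by some $g\ge 0$ in $L^0$ that, by strict positivity, is $>0$ a.s.\ on the support of $X$. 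Replacing $\phi$ by $y\mapsto\int(g\wedge\one)\,y$ (still a strictly positive element of $X^\sim_{uo}$) and then resetting $g$ to equal $\one$ off the support of $X$ (harmless, as every element of $X$ vanishes there), we may assume $0<g\le\one$ a.s.; then $g\in L^1(\bP)$, and the probability measure $\mathbb Q$ with $\bP$-density $g/\int g\,d\bP$ satisfies $\mathbb Q\sim\bP$. Finally I would check that the relative $L^0$- and $L^1(\mathbb Q)$-topologies agree on $K$: one inclusion is automatic ($L^1(\mathbb Q)$-convergence always implies convergence in probability), and for the other, if $y_n,y\in K$ with $y_n\to y$ in $L^0$ then $\abs{y_n-y}/2\in K$ (by solidity) with $\abs{y_n-y}/2\to 0$ in probability, so continuity of $\phi$ on $K$ at $0$ gives $\bigl(\int g\,d\bP\bigr)\norm{y_n-y}_{L^1(\mathbb Q)}=\phi(\abs{y_n-y})\to 0$. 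As both relative topologies on $K$ are metrizable, they coincide, which is \eqref{t92} (recall $L^0(\mathbb Q)=L^0(\bP)$).

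The implication \eqref{t92}$\Rightarrow$\eqref{t93} is immediate: were $K$ not $\mathbb Q$-uniformly integrable, one could find $\ep>0$, $x_n\in K$ and sets $A_n$ with $\mathbb Q(A_n)\to 0$ but $\int_{A_n}\abs{x_n}\,d\mathbb Q\ge\ep$; then $x_n\one_{A_n}\in K$ by solidity, $x_n\one_{A_n}\to 0$ in $L^0$, yet $\norm{x_n\one_{A_n}}_{L^1(\mathbb Q)}\ge\ep$, contradicting \eqref{t92}. For \eqref{t93}$\Rightarrow$\eqref{t91}, $\mathbb Q$-uniform integrability of $K$ (which, $\mathbb Q$ being non-atomic, also forces $L^1(\mathbb Q)$-boundedness of $K$) together with Vitali's theorem shows that $L^0$- and $L^1(\mathbb Q)$-convergence agree for sequences in $K$; hence the two relative topologies on $K$ coincide, and the latter, being the restriction of a normed topology, is locally convex. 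The main obstacle is \eqref{t91}$\Rightarrow$\eqref{t92}, and inside it the two points stressed above: verifying that $(X,\rho)$ really is a normed function space whose uo-dual contains $X^*_K$ --- this is where boundedness in probability and solidity are used --- and adjusting the density of the strictly positive functional coming out of Theorems~\ref{slocalHB}--\ref{t8} so that it becomes an integrable function that is strictly positive a.s.\ on all of $\Om$, thereby yielding a bona fide equivalent probability measure.
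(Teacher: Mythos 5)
Your proposal is correct and follows essentially the same route as the paper: generate the normed function space $X=\bigcup_n nK$ with the Minkowski functional of $K$, use the local Hahn--Banach machinery (Theorem~\ref{t8}) to obtain a strictly positive element of $X^\sim_{uo}$, normalize its density into an equivalent probability measure $\mathbb{Q}$, and deduce uniform integrability via the truncation/Vitali-type argument. The only differences are cosmetic: you factor the argument as \eqref{t91}$\Rightarrow$\eqref{t92}$\Rightarrow$\eqref{t93} instead of the paper's direct \eqref{t91}$\Rightarrow$\eqref{t93}, and you tame the density with $g\wedge\one$ (set to $\one$ off the support of $X$) where the paper uses $\bigl(f\one_{\{f>0\}}+\one_{\{f=0\}}\bigr)/(f+\one)$.
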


\begin{proof}
The implications \eqref{t93}$\implies$\eqref{t92}$\implies$\eqref{t91} are clear. Suppose that \eqref{t91} holds. Put $X=\{\lambda x:\lambda\in\R, x\in K\}$ and $\rho(x)=\inf\big\{\lambda>0:\frac{x}{\lambda}\in K\big\}$ for any $x\in X$. Since $K$ is convex,  solid and bounded in probability, it is straightforward to verify that $(X,\rho)$ is a normed function space. The unit ball $B = \{x\in X:\rho(x)\leq 1\}$  is contained in $2K$.  Hence the relative $L^0$-topology on the unit ball $B$ is locally convex at $0$. By Theorem~\ref{t8}, $X_{uo}^\sim$ admits a strictly positive member $\phi$. Again, identify $\phi$ with some $0\leq f\in L^0$.
By strict positivity of $\phi$, one easily sees that $\one_{\{f= 0\}}y=0$ a.s.~for any $y\in X$. Set
\begin{align*}
g=\frac{f\one_{\{f>0\}}+\one_{\{f= 0\}}}{f+\one},\qquad \mathrm{d}\mathbb{Q}=\frac{g}{\int g\mathrm{d}\bP}\mathrm{d}\bP.
\end{align*}
Then $\mathbb{Q}$ is a probability measure equivalent to $\bP$.
Let $(x_n)\subset B$ and $x\in B$ be such that $x_n\stackrel{L^0}{\longrightarrow}x$. Then $\phi(\abs{x_n-x})\longrightarrow0$, so that
\begin{align}
\nonumber&\norm{x_n-x}_{L^1(\mathbb{Q})}\\
\label{equ3}=&\frac{1}{\int g\mathrm{d}\bP}\int \abs{x_n-x}\frac{f\one_{\{f>0\}}+\one_{\{f= 0\}}}{f+\one}\mathrm{d}\bP
\leq \frac{1}{\int g\mathrm{d}\bP}\int \abs{x_n-x}\big(f\one_{\{f>0\}}+\one_{\{f= 0\}}\big)\\
\nonumber=&\frac{1}{\int g\mathrm{d}\bP}\int \abs{x_n-x} f\mathrm{d}\bP
= \frac{1}{\int g\mathrm{d}\bP}\phi(\abs{x_n-x})\longrightarrow0.
\end{align}
If $K$ is not $\mathbb{Q}$-uniformly integrable, then there exist $\ep>0$, a sequence $(A_n)$ of measurable sets and a sequence $(y_n)\subset K$ such that $\mathbb{Q}(A_n)\longrightarrow0$ and $\int_{A_n}\abs{y_n}\mathrm{d}\mathbb{Q}\geq \ep$ for all $n\in\N$. Put $x_n=\one_{A_n}y_n$ for $n\in\N$. Since $\mathbb{Q}(A_n)\longrightarrow0$, $x_n\stackrel{L^0}{\longrightarrow}0$. Since $K$ is solid, $(x_n)\subset K\subset B$. But $\norm{x_n}_{L^1(\mathbb{Q})}\geq \ep$ for all $n\in\N$. This contradicts \eqref{equ3} and  proves \eqref{t91}$\implies$\eqref{t93}.
\end{proof}

Finally, we develop a Banach lattice version of Theorem~\ref{t8}.
We refer to \cite{AB:06} for facts and undefined terminology on vector and Banach lattices.
A net $(x_\alpha)$ in a vector lattice $X$ is said to {\em order
converge} to $x\in X$, written as $x_\al\stackrel{o}{\longrightarrow}x$,  if there is
another net $(y_\gamma)$ in $X$ such that $y_\gamma \downarrow 0$ and that for
every
$\gamma$, there exists $\al_0$ such that $\abs{x_\al - x} \leq y_\gamma$ for all
$\al \geq \al_0$. A net $(x_\al)$ in $X$ is said to {\em unbounded order
converge} ({\em uo-converge}) to $x\in X$ if $|x_\al-x| \wedge
y\stackrel{o}{\longrightarrow}0$ for any $y \in X_+$. In this case, we write
$x_\alpha\stackrel{uo}{\longrightarrow}x$. It is known that a sequence in a function space over $(\Omega,\Sigma,\bP)$ uo-converges to $0$ iff it converges to $0$ a.s. Moreover, $x_\alpha\stackrel{o}{\longrightarrow}0$ iff it has an order bounded tail and $x_\alpha\stackrel{uo}{\longrightarrow}0$.
For a Banach lattice $X$, the \emph{order continuous dual} $X_n^\sim$ (resp.~\emph{uo-dual} $X_{uo}^\sim$) is the collection of all linear functionals $\phi$ on $X$ such that $\phi(x_\alpha)\longrightarrow0$ whenever $x_\alpha\stackrel{uo}{\longrightarrow}0$ and $(x_\alpha)$ is order bounded (resp.~norm bounded) in $X$. \cite[Theorem~2.3]{GLX:18} characterizes $X_{uo}^\sim$ as the order continuous part of $X_n^\sim$.
We refer to \cite{GTX:17} and \cite{GLX:18} for facts about uo-convergence and uo-dual.
The uo-convergence has garnered interest as the natural generalization of a.s.-convergence.

For a Banach lattice $X$, \cite{DOT:17} introduced the \emph{un-topology}, which is a vector lattice topology on $X$, generated by the family of sets $$V_{y,\ep}=\big\{x\in X:\bignorm{\abs{x}\wedge y}<\ep\big\},\quad y\in X_+,\ep>0$$ as a local basis at $0$. The un-topology is generally not metrizable (see \cite{KMT:18,KT:18}). A net $(x_\alpha)$ un-converges to $x$ iff $\bignorm{\abs{x_\alpha-x}\wedge y}\longrightarrow0$ for any $y\in X_+$. A Banach lattice $X$ is said to be \emph{order continuous} if $\norm{x_\alpha}\longrightarrow0$ whenever $x_\al\stackrel{o}{\longrightarrow}0$  in $X$. \cite[Theorem~5.2]{KLT:18} asserts that the un-topology coincides with the topology of convergence in probability on an order continuous Banach function space. Thus the un-topology is regarded as a generalization of convergence in probability to (order continuous) Banach lattices.

By \cite[Theorem~5.2]{KMT:18}, the un-topology on an order continuous Banach lattice $X$ is never locally convex unless $X$ is atomic. We show that it can nevertheless be locally convex on the unit ball and the occurrence of this depends on how rich $X_{uo}^\sim$ is.

\begin{thm}\label{uod-oc}
Let $X$ an order continuous Banach lattice. The following are equivalent.
\begin{enumerate}
\item\label{uod-oc1} The relative un-topology on the unit ball is locally convex at $0$.
\item\label{uod-oc2} The uo-dual $X_{uo}^\sim$ separates points of $X$.
\end{enumerate}
\end{thm}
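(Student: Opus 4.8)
The plan is to follow the scheme of Theorem~\ref{t8}, replacing the relative $L^0$-topology by the relative un-topology and the implication ``norm-null $\implies$ $L^0$-null'' by order continuity. Write $\tau$ for the un-topology on $X$ and $B$ for its closed unit ball; $\tau$ is a Hausdorff linear topology (for $x\neq 0$ one has $x\notin V_{\abs{x},\,\norm{\abs{x}}}$) and $B$ is convex, absorbing and balanced, so Theorems~\ref{localHB} and \ref{slocalHB} apply. For \eqref{uod-oc1}$\implies$\eqref{uod-oc2} I would apply Theorem~\ref{slocalHB} to $(X,\tau)$ and $B$ to get, for each $x\in X\bs\{0\}$, a linear functional $\phi$ on $X$ with $\phi(x)\neq 0$ and $\phi_{|B}$ $\tau_B$-continuous at $0$; then $\phi\in X_{uo}^\sim$, because if $(x_\al)$ is norm bounded and $x_\al\stackrel{uo}{\longrightarrow}0$ then $\abs{x_\al}\wedge y\stackrel{o}{\longrightarrow}0$ for each $y\in X_+$, so $\bignorm{\abs{x_\al}\wedge y}\to 0$ by order continuity, i.e. $x_\al\stackrel{\tau}{\longrightarrow}0$, and rescaling $(x_\al)$ into $B$ gives $\phi(x_\al)\to 0$. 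Hence $X_{uo}^\sim$ separates points.

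For \eqref{uod-oc2}$\implies$\eqref{uod-oc1} the naive attempt — extract a single strictly positive $\phi\in X_{uo}^\sim$ and argue as in Theorem~\ref{t8}\,\eqref{t83}$\implies$\eqref{t80} — does not work: for uncountable $\Gamma$, $(\ell^2(\Gamma))_{uo}^\sim=\ell^2(\Gamma)$ separates points yet has no strictly positive member. So I would \emph{localize to principal bands}. Fix $y\in X_+$ and let $B_y$ be the band generated by $y$, an order continuous Banach lattice with weak unit $y$. Order bounded disjoint families in $B_y$ are countable (the increasing partial sums over finite subsets of the positive terms are order bounded, hence norm converge), so $B_y$ has the countable sup property and is lattice isometric to an order continuous Banach function space over a probability space (see, e.g., \cite{MN:91}); on the unit ball of the latter the un-topology is the topology of convergence in probability by \cite[Theorem~5.2]{KLT:18}. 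Since $X_{uo}^\sim$ separates the points of $B_y$, and since $uo$-convergence passes freely to and from ideals (\cite{GTX:17}), restriction maps $X_{uo}^\sim$ into $(B_y)_{uo}^\sim$, so $(B_y)_{uo}^\sim$ separates the points of $B_y$; applying Theorem~\ref{t8} to $B_y$ (its proof does not use non-atomicity) yields $\psi_y^0\in(B_y)_{uo}^\sim$ strictly positive on $B_y$ such that on the unit ball of $B_y$ the $\norm{\cdot}_{\psi_y^0}$-topology, the $L^0$-topology and the un-topology all coincide.

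Next, with $P_{B_y}$ the band projection onto $B_y$, I would set $\psi_y=\psi_y^0\circ P_{B_y}\in X_{uo}^\sim$ and $W_{y,\ep}=\{x\in B:\psi_y(\abs{x})<\ep\}$; each $W_{y,\ep}$ is convex, and I claim $\{W_{y,\ep}:y\in X_+,\ \ep>0\}$ is a base at $0$ for $\tau_B$, which gives \eqref{uod-oc1}. On one hand $P_{B_y}$ is $\tau$-continuous, since $\bignorm{P_{B_y}\abs{x}\wedge z}\leq\bignorm{\abs{x}\wedge z}$ for $z\in X_+$; so a $\tau$-null net in $B$ maps under $P_{B_y}$ to a $\tau$-null net in the unit ball of $B_y$, whence $\psi_y(\abs{x_\al})=\psi_y^0(P_{B_y}\abs{x_\al})\to 0$ and $W_{y,\ep}$ is a $\tau_B$-neighbourhood of $0$. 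On the other hand, given $y\in X_+$ and $\delta>0$, on $[0,y]$ the $\norm{\cdot}_{\psi_y^0}$-topology dominates the norm (convergence in probability of an order bounded net forces norm convergence in an order continuous function space, via an a.s.-convergent subsequence), so there is $\ep>0$ with $\norm{v}<\delta$ whenever $v\in[0,y]$ and $\psi_y^0(v)<\ep$; as $\abs{x}\wedge y\in[0,y]$ and $\psi_y^0(\abs{x}\wedge y)\leq\psi_y^0(P_{B_y}\abs{x})=\psi_y(\abs{x})$ for $x\in B$, we obtain $W_{y,\ep}\subseteq V_{y,\delta}\cap B$, and since $\{V_{y,\delta}\cap B\}$ is a base at $0$ for $\tau_B$, so is $\{W_{y,\ep}\}$.

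The main obstacle is the one displayed above: point separation by $X_{uo}^\sim$ gives no global change of norm, so one must route through the principal bands $B_y$ and check that $\tau_B$ is compatible with this band structure — that $\tau$ restricted to $B_y$ is the un-topology of $B_y$, and that the band projections $P_{B_y}$ are $\tau$-continuous — which is precisely what makes the convex neighbourhoods $W_{y,\ep}$, assembled from the $\psi_y^0$, refine the standard un-neighbourhoods $V_{y,\delta}\cap B$.
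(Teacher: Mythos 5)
Your proof is essentially correct, but for the harder implication \eqref{uod-oc2}$\implies$\eqref{uod-oc1} it takes a genuinely different route from the paper. The paper argues by contradiction: if local convexity at $0$ fails, it fails along some fixed $y_0\in X_+$, and using $\bignorm{\abs{x}\wedge y_0}=\bignorm{\abs{Px}\wedge y_0}$ together with the fact that $\{x:\bignorm{\abs{x}\wedge y_0\,}<\tfrac1n\}$ is a local base of the un-topology of the band $E$ generated by $y_0$ (via \cite[Theorem~3.2]{KMT:18}, since $y_0$ is quasi-interior in $E$), it localizes the \emph{failure} of \eqref{uod-oc1} to $E$, represents $E$ as an order continuous Banach function space, and gets a contradiction from Theorem~\ref{t8}. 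You instead give a direct construction: for each $y\in X_+$ you apply Theorem~\ref{t8} inside the principal band $B_y$ (same representation step, same implicit use of \cite[Theorem~5.2]{KLT:18}) to obtain a strictly positive $\psi_y^0\in(B_y)_{uo}^\sim$, and you assemble explicit convex sets $W_{y,\ep}=\{x\in B:\psi_y^0(P_{B_y}\abs{x})<\ep\}$ which you verify are relative un-neighbourhoods of $0$ refining the sets $V_{y,\delta}\cap B$; this exhibits a convex base at $0$ rather than refuting its absence, and your $\ell^2(\Gamma)$ remark correctly explains why the global strictly-positive-functional argument of Theorem~\ref{t8}\eqref{t83}$\implies$\eqref{t80} cannot be transplanted verbatim. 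Your treatment of \eqref{uod-oc1}$\implies$\eqref{uod-oc2} is also slightly leaner than the paper's: by invoking Theorem~\ref{slocalHB} you only need continuity of $\phi_{|B}$ at $0$ and the easy inclusion $X_B^*\subset X_{uo}^\sim$, whereas the paper proves the full identification $X_B^*=X_{uo}^\sim$, whose reverse inclusion requires \cite[Corollary~3.5]{DOT:17}. Two small points you should make explicit: principal bands are projection bands because an order continuous Banach lattice is Dedekind complete, and restrictions of members of $X_{uo}^\sim$ lie in $(B_y)_{uo}^\sim$ because bands are regular sublattices, so norm bounded uo-null nets in $B_y$ are uo-null in $X$ (\cite{GTX:17}); your observation that Theorem~\ref{t8} does not need non-atomicity is also needed by the paper's own proof, so it is not an extra cost of your approach.
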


\begin{proof}To apply Theorem~\ref{localHB}, set $\tau$ to be the un-topology and $B$ be the unit ball. We claim that $X_B^*=X_{uo}^\sim$. Let $(x_\alpha)$ be a net in $B$ and $x\in B$ such that $x_\alpha\stackrel{uo}{\longrightarrow}x$. Then $|x_\al-x| \wedge y\stackrel{o}{\longrightarrow}0$, so that $\bignorm{\abs{x_\alpha-x}\wedge y}\longrightarrow0$, for any $y \in X_+$, by order continuity of $X$.
It follows that $X_{B}^*\subset X^\sim_{uo}$. For the reverse inclusion, suppose otherwise that there exists $\phi\in X_{uo}^\sim\backslash X_B^*$. Then $\phi(x_\alpha)\centernot{\longrightarrow} \phi(x)$ for some net $(x_\alpha)\subset B$ and $x\in B$ such that $x_\alpha\stackrel{un}{\longrightarrow}x$.
By passing to a subnet, we may assume that $\bigabs{\phi(x_\alpha)-\phi(x)}\geq \ep$ for some $\ep>0$ and all $\alpha$. Since $x_\alpha\stackrel{un}{\longrightarrow}x$, there exist countably many $(\alpha_n)$ such that $x_{\alpha_n}\stackrel{uo}{\longrightarrow}x$ (\cite[Corollary~3.5]{DOT:17}), so that $\phi(x_{\alpha_n})\longrightarrow\phi(x)$. This contradiction proves the claim. The implication \eqref{uod-oc1}$\implies$\eqref{uod-oc2} now follows immediately from Theorem~\ref{localHB}.

Suppose that \eqref{uod-oc2} holds but \eqref{uod-oc1} fails. Then there exist $y_0\in X_+$ and $\ep_0>0$ such that $V:=\big\{x\in B:\bignorm{\abs{x}\wedge y_0}<\ep_0\big\}$ does not contain a convex relative un-neighborhood of $0$ on $B$.
In particular, for any $n\in\N$,
\begin{align*}
\text{the convex hull of } \Big\{x\in B:\bignorm{\abs{x}\wedge y_0}<\frac{1}{n}\Big\}\text{ is not contained in }V.
\end{align*}
Let $E$ be the band generated by $y_0$ and $P$ be the corresponding band projection. Set $B^E=B\cap E$, the unit ball of $E$. Put  $V^E:=\big\{x\in B^E:\bignorm{\abs{x}\wedge y_0}<\ep_0\big\}$. Note that $\bignorm{\abs{x}\wedge y_0}=\bignorm{P(\abs{x}\wedge y_0)}=\bignorm{(P\abs{x})\wedge y_0}=\bignorm{\abs{Px}\wedge y_0}$ for all $x\in X$. Thus it is straightforward to verify that, for any $n\in\N$,
\begin{align}\label{localcon}
\text{the convex hull of } \Big\{x\in B^E:\bignorm{\abs{x}\wedge y_0}<\frac{1}{n}\Big\}\text{ is not contained in }V^E.
\end{align}
Since $y_0$ is a quasi-interior point of $E$, $\big\{x\in E:\bignorm{\abs{x}\wedge y_0}<\frac{1}{n}\big\}$, $n\in\N$, is in fact a local basis of the un-topology of $E$ at $0$ (cf.~\cite[Theorem~3.2]{KMT:18}).
Therefore, by \eqref{localcon}, the relative un-topology of $E$ to $B^E$ is not locally convex at $0$. That is, Condition~\eqref{uod-oc1} fails for $E$. Splitting $X=E\oplus E^d$, one sees that $E^\sim_{uo}$ separates points of $E$, i.e., Condition~\eqref{uod-oc2} holds for $E$. It is well-known that, being an order continuous Banach lattice with a quasi-interior point, $E$ is lattice isometric to an order continuous Banach function space $\widetilde{E}$ over a probability space (cf.~\cite[Proposition~1.b.14]{LT:79}). By Theorem~\ref{t8}, Conditions~\eqref{uod-oc1} and \eqref{uod-oc2} are equivalent for $\widetilde{E}$ and thus for $E$. This contradiction completes the proof.
\end{proof}

\end{document}